\documentclass[letterpaper, 10 pt, conference]{ieeeconf}  

\IEEEoverridecommandlockouts                              

\usepackage{graphicx}%
\usepackage{multirow}%
\usepackage{amsmath,amssymb,amsfonts}%
\usepackage{amsthm}%
\let\labelindent\relax
\usepackage{enumitem}
\usepackage{mathrsfs}%
\usepackage{xcolor}%
\usepackage{textcomp}%
\usepackage{manyfoot}%
\usepackage{booktabs}%
\usepackage{algorithm}%
\usepackage{algorithmicx}%
\usepackage{algpseudocode}%
\usepackage{listings}%
\usepackage{epstopdf}
\usepackage{mathtools}
\usepackage{subcaption} 
\usepackage{amsthm}
\usepackage{changes}
\usepackage{subcaption}
\usepackage{graphicx}
\usepackage{subcaption}
\usepackage{caption}
\newtheorem{assumption}{Assumption}  
\newtheorem{theorem}{Theorem}  
\newtheorem{lemma}{Lemma}  
\newtheorem{rem}{Remark}  
\newtheorem{cor}{Corollary}  

\newcommand{\R}{{\mathbb{R}}}

\newcommand{\BEQ}{\begin{equation}}
\newcommand{\EEQ}{\end{equation}}

\newcommand{\mb}{\mathbf}

\title{\LARGE \bf
An Accelerated Proximal Bundle Method with Momentum}

\author{Zhuoqing Zheng$^1$, Junshan Yin$^1$, Shaofu Yang$^2$, and Xuyang Wu$^1$ 
\thanks{$^1$Z. Zheng, J. Yin, and X. Wu are with the School of Automation and Intelligent Manufacturing, Southern University of Science and Technology, Shenzhen 518055, China, and the State Key Laboratory of Autonomous Intelligent Unmanned Systems, Beijing 100081, China.  Email: {\tt\footnotesize 12433033@mail.sustech.edu.cn; 12431375@mail.sustech.edu.cn; wuxy6@sustech.edu.cn}}
\thanks{$^2$S. Yang is with School of Computer Science and Engineering, Southeast University, Nanjing 21189, China. Email: {\tt\footnotesize  sfyang@seu.edu.cn}}
\thanks{This work is supported in part by the Guangdong Provincial Key Laboratory of Fully Actuated System Control Theory and Technology under grant No. 2024B1212010002, in part by the Shenzhen Science and Technology Program under grant No. JCYJ20241202125309014, in part by the Shenzhen Science and Technology Program under grant No. KQTD20221101093557010, and in part by the Guangdong Basic and Applied Basic Research Foundation under Grant No. 2026A1515012017.}
}

\begin{document}

\maketitle
\thispagestyle{empty}
\pagestyle{empty}

\begin{abstract}
    Proximal bundle methods (PBM) are a powerful class of algorithms for convex optimization. Compared to gradient descent, PBM constructs more accurate surrogate models that incorporate gradients and function values from multiple past iterations, which leads to faster and more robust convergence. However, for smooth convex problems, PBM only achieves an $O(1/k)$ convergence rate, which is inferior to the optimal $O(1/k^2)$ rate. To bridge this gap, we propose an accelerated proximal bundle method (APBM) that integrates Nesterov's momentum into PBM. We prove that under standard assumptions, APBM achieves the optimal $O(1/k^2)$ convergence rate. Numerical experiments demonstrate the effectiveness of the proposed APBM.
\end{abstract}

\section{Introduction}
This article addresses the unconstrained convex optimization problem:
\begin{equation}\label{problem}
       \underset{x \in \mathbb{R}^n}{\operatorname{minimize}} ~~ f(x),
\end{equation}
where $f : \mathbb{R}^n \rightarrow \mathbb{R}$ is convex and differentiable. This 
problem has found numerous applications in machine learning \cite{wright2022optimization}, control system \cite{agrawal2020learning}, and signal processing \cite{luo2006introduction}.

A large number of algorithms have been proposed to solve problem \eqref{problem}, among which a typical method is gradient descent (GD) \cite{boyd2004convex}, which minimizes the proximal linear model (first-order Taylor expansion plus a proximal term) of the objective function $f$ at each iteration.
However, the proximal-linear model can be a crude
approximation of the objective function, which further leads to slow convergence. To enhance GD, the proximal bundle method (PBM) \cite{lemarechal1975extension,diaz2023optimal,Nesterov2021memory} incorporates a proximal bundle model into the update. Compared to the proximal linear model, the proximal bundle model incorporates historical objective values and gradients to achieve a higher approximation accuracy \cite{Nesterov2021memory}.  Compared to GD, PBM not only converges fast but also exhibits extraordinary robustness in the step-size \cite{lemarechal1993convex}. For this reason, a growing body of works extends PBM to solve optimization problems with different structures  \cite{zhu2025historical,cederberg2025an,hare2010redistributed,de2019proximal}. 

Another effective approach for improving GD is to incorporate Nesterov's momentum scheme into the update, resulting in the accelerated gradient descent (AGD) \cite{nesterov1983method,Beck09FISTA,d2021acceleration}. Compared with GD, AGD employs the gradient descent step at the specific linear combination (controlled by momentum coefficient) of the previous two points, but rather at the most recent point. AGD achieves the $O(1/k^2)$ convergence rate for convex smooth optimization, which is also the optimal rate that can attained by gradient-based methods under this setting \cite[Section 2.1.2]{nesterov2018lectures}. Inspired by the extraordinary performance of momentum scheme, \cite{Beck09FISTA} extends it to solve the problem with composite structure, \cite{xu2017accelerated} applies it to primal-dual algorithms, and \cite{huang2025accelerated} adapts it to the distributed optimization.

While PBM exhibits excellent convergence performance, its convergence rate, for convex smooth problems, is typically of $O(1/k)$ \cite{Nesterov2021memory,diaz2023optimal}, rather than the optimal rate $O(1/k^2)$. On the other hand, AGD achieves the optimal rate but its update still relies on a gradient descent step that can potentially lead to slow convergence and lack of robustness. A natural idea is to combine PBM with AGD, taking advantage of both to obtain a better algorithm. Although this idea is also explored in the existing works \cite{liao2025accelerated,fersztand2025acceleration}, they both include an internal loop that complicates the algorithm.

This article incorporates the momentum scheme into the PBM, resulting in an accelerated proximal bundle method (APBM) that can attain the accelerated $O(1/k^2)$ convergence rate while maintaining the robustness of PBM. The main contributions of this paper are as follows:
\begin{enumerate}[label=\arabic*)]
    \item We propose an accelerated proximal bundle method that combines PBM with the momentum scheme. Compared to \cite{liao2025accelerated,fersztand2025acceleration}, our method does not include any inner loop, which yields a simpler form.
    \item We provide a convergence rate of $O(1/k^2)$ for our algorithm under standard assumptions.
    \item We demonstrate the practical advantages of our method by numerical experiments. 
\end{enumerate}

The remaining part of this paper is organized as follows: Section \ref{sec:algorithms} develops the algorithm and Section \ref{sec:convergence} analyses the convergence. Section \ref{sec:numrical} illustrates the practical advantages of our method through numerical experiments, and Section \ref{sec:conclusion} concludes the paper.

\textbf{Notations and definitions.} We denote by $\langle \cdot,\cdot \rangle$ the Euclidean inner product and use $\|\cdot\|$ to represent the Euclidean norm for vectors and the spectral norm for matrices. For any differentiable $f:\mathbb{R}^n \rightarrow \mathbb{R}$, $\nabla  f(x) \in \mathbb{R}^n$ represents its gradient at $x$. We use $\mb{1}$ to denote the all-one vector of proper dimensions. For a differentiable function $f$, we say it is $L$-smooth for some $L>0$ if
\[\|\nabla f(x)-\nabla f(y)\| \leq L\|x-y\|\quad \forall x,y \in \mathbb{R}^n,\]
and it is $\mu$-strongly convex for some $\mu>0$ if 
\[\langle \nabla f(x) - \nabla f(y), x-y\rangle \geq \mu \|x-y\|^2\quad \forall x,y \in \mathbb{R}^n.\] 

\section{Algorithm}\label{sec:algorithms}
This section is organized into three parts. First, we develop the algorithm, which involves a surrogate model $\hat{f}^k$ and a subproblem at each iteration. Next, we discuss options of $\hat{f}^k$. Finally, we show that the resulting subproblems can be solved efficiently via dual approaches..
\subsection{Algorithm development}
The algorithm is developed based on the proximal bundle method (PBM) and Nesterov's momentum scheme.

    {\bf PBM}: At each iteration $k\ge 0$, it updates as \cite{diaz2023optimal, Nesterov2021memory}:
\begin{equation*}
    x^{k+1} = \arg\min_x ~ \hat{f}^k(x) + \frac{1}{2\gamma}\|x-x^k\|^2,
\end{equation*}
where $\hat{f}^k$ is a minorant of $f$ satisfying $\hat{f}^k(x)\le f(x)$ for all $x\in\R^n$ and $\gamma>0$ is the step-size. A typical example of $\hat{f}^k$ is the cutting-plane model \cite{kelley1960cutting}: 
\begin{equation*}
    \hat{f}^k(x)  = \max_{t\in S^k} \{ f(x^t) + \langle \nabla f(x^t),x-x^t \rangle\},
\end{equation*}
where $S^k \subseteq \{ 1,\dots,k\}$ is an index set that contains $k$. When $S^k = \{k\}$, the proximal bundle model reduces to the first-order Taylor expansion of $f$ and PBM becomes gradient descent (GD). The use of multiple cutting planes yields a higher approximation accuracy of $\hat{f}^k$ on $f$ compared to the first-order Taylor expansion: since $f(x) \geq \hat{f}^k(x) \geq f(x^k) + \langle \nabla f(x^k),x-x^k\rangle$, we have 
\[ f(x) - \hat{f}^k(x) \leq f(x) - (f(x^k) + \langle \nabla f(x^k),x-x^k\rangle), \]
which is also clear from Fig. \ref{fig:surrogate functions}~(b). The higher approximation accuracy yields not only faster convergence but also stronger robustness in the step-size \cite{lemarechal1993convex}.

{\bf Nesterov’s momentum scheme}: It is a powerful technique in enhancing the performance of first-order methods, and an example is accelerated gradient descent (AGD) \cite{nesterov1983method,Beck09FISTA,d2021acceleration}, which achieves the optimal $O(1/k^2)$ convergence rate for smooth convex optimization. By assuming $L$-smooth $f$ for some $L>0$, the AGD in \cite{Beck09FISTA} updates as: Set $y^1=x^0$ and $t_1=1$. For each $k\ge 1$,
\begin{align}
    &x^k = y^k - \frac{1}{L}\nabla f(y^k),\label{eq:AGD-x} \\
    &t_{k+1} = \frac{1+ \sqrt{1+4t_k^2}}{2}, \label{eq:AGD-t} \\
    &y^{k+1} = x^k  + \frac{t_k-1}{t_{k+1}}(x^k-x^{k-1}).\label{eq:AGD-y}
\end{align}
The algorithm maintains three sequences: the iterative sequence $\{x^k\}$, coefficient sequence $\{t^k\}$, and the extrapolated sequence $\{y^k\}$. Compared with GD, AGD performs a gradient descent step at the extrapolated point $y^k$ rather than $x^k$. This extrapolation sequence $\{y^k\}$ is generated by the linear combination of the two most recent iterations $x^k$ and $x^{k-1}$, with the coefficient $\{ t_k\}$ controlling the momentum strength.

Both the proximal bundle model and Nesterov's momentum scheme can significantly improve the performance of GD. Therefore, a natural idea is to combine them for better performance. We keep the updates of the extrapolated sequence $\{y^k\}$ and the coefficient sequence $\{ t_k\}$ in AGD \eqref{eq:AGD-x}--\eqref{eq:AGD-y}, and incorporate the proximal bundle step into the update of $x^k$, leading to
\begin{equation}\label{eq:APBM-x}
    x^k = \arg\min_x ~\hat{f}^k(x)+ \frac{L}{2}\|x-y^k\|^2.
\end{equation}
We refer to the algorithm with \eqref{eq:APBM-x}, \eqref{eq:AGD-t}, and \eqref{eq:AGD-y} as the Accelerated Proximal Bundle Method (APBM), where a detailed implementation is described in Algorithm \ref{algo:APBM}. 

\begin{rem}
The works \cite{liao2025accelerated,fersztand2025acceleration} also incorporate Nesterov’s acceleration into PBM. However, both methods involve an inner loop that iterates until a prescribed condition is satisfied, which increases algorithmic complexity. In contrast, our method eliminates the need for such inner loops and admits a simpler structure.
\end{rem}

\begin{figure*}[t]
\centering
    \begin{subfigure}[t]{0.26\linewidth}
        \includegraphics[width=\linewidth]{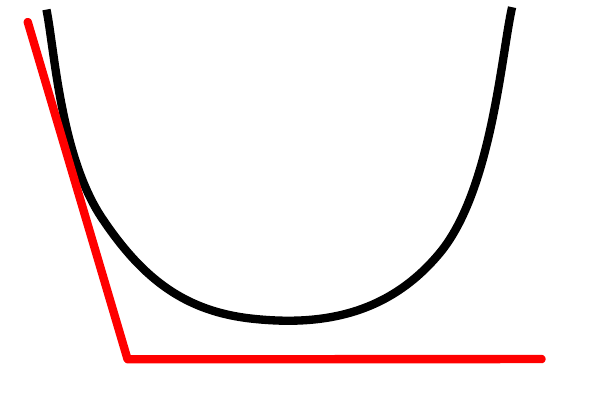}
        \caption{Polyak}
    \end{subfigure} 
    \hfill
    \begin{subfigure}[t]{0.26\linewidth}
        \includegraphics[width=\linewidth]{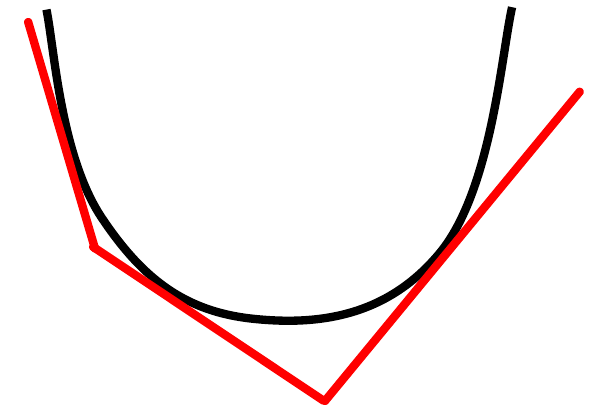}
        \caption{Cutting-plane}
    \end{subfigure} 
    \hfill
    \begin{subfigure}[t]{0.26\linewidth}
        \includegraphics[width=\linewidth]{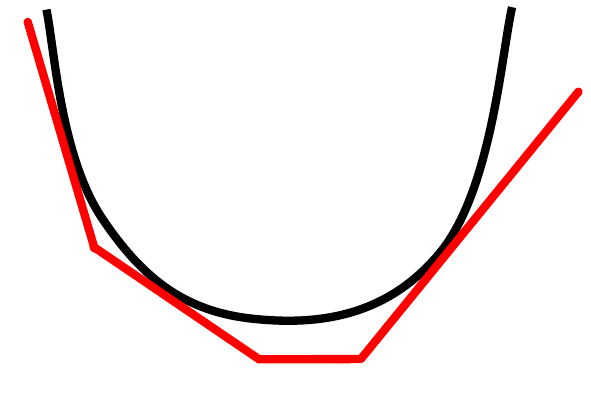}
        \caption{Polyak cutting-plane}
    \end{subfigure} 
    \caption{Surrogate functions in the Polyak model \eqref{eq:pol_model}, cutting-plane model \eqref{eq:cpm}, and the Polyak cutting-plane model \eqref{eq:pcpm}}
    \label{fig:surrogate functions}
\end{figure*}

 \begin{algorithm}[t]
\caption{Accelerated Proximal Bundle Method (APBM)}\label{algo:APBM}
\begin{algorithmic}[1]
\State \textbf{Initialization:} $x^0 \in \mathbb{R}^n,~y^1 = x^0,~t_1=1$. 
\For{$k = 1, 2, \ldots$}
\vspace{0.2cm}
\State $x^k = \arg\min_x ~\hat{f}^k(x)+ \frac{L}{2}\|x-y^k\|^2$.
\vspace{0.2cm}
\State $t_{k+1} = \frac{1+ \sqrt{1+4t_k^2}}{2}$.
\vspace{0.2cm}
\State $ y^{k+1} = x^k  + \frac{t_k-1}{t_{k+1}}(x^k-x^{k-1})$.
\vspace{0.2cm}
\EndFor
\end{algorithmic}
\end{algorithm}

\subsection{Candidates of the model $\hat{f}^k$}\label{ssec:candidates}

The performance of APBM relies on the selection of the bundle model $\hat{f}^k$ and, in general, we prefer models that yield a high approximation accuracy on $f$ and a low computational cost of solving \eqref{eq:APBM-x}. In this subsection, we introduce several candidates of $\hat{f}^k$ satisfying the following assumption, which requires $\hat{f}^k$ to be a convex minorant of $f$ and a majorant of the cutting plane at $y^k$ and will be used in Section \ref{sec:convergence} for convergence analysis.
\begin{assumption}\label{asm:pri_model}
    The model $\hat{f}^k$ satisfies
    \begin{enumerate}[label=(\alph*)]
    \item $\hat{f}^k$ is convex;\label{primal model assumption a}
    \item $\hat {f}^k(x) \geq f(y^k) + \langle \nabla f(y^k),x-y^k\rangle$ for all $x\in\R^n$;\label{primal model assumption b}
    \item $\hat{f}^k(x)\leq f(x)$ for all $x\in\R^n$.\label{primal model assumption c}
    \end{enumerate}
\end{assumption}

Below, we provide several candidates of $\hat{f}^k$ that satisfy Assumption \ref{asm:pri_model}, where three of them are visualized in Fig. \ref{fig:surrogate functions}. 
    \begin{itemize}[leftmargin=*]
        \item \textbf{Polyak model}: This model originates from the Polyak step-size \cite{polyak1987introduction} for gradient descent and takes the form of
        \begin{equation}\label{eq:pol_model}
            \hat{f}^k(x) = \max\{f(y^k)+\langle\nabla f(y^k), x-y^k\rangle, \ell_f\}
        \end{equation}
    where $\ell_f$ is a known lower bound of $f$. This model and its variants are shown to be particularly effective in stochastic optimization \cite{wang2023generalized}.
    \item \textbf{Cutting-plane model}: It takes the maximum of historical cutting planes \cite{kelley1960cutting}:
        \begin{equation}\label{eq:cpm}
            \hat{f}^k (x) = \max_{t\in S^k}~f(y^t)+\langle \nabla f(y^t),x-y^t\rangle
        \end{equation}
        where $S^k\subseteq \{0,1,\ldots,k\}$ is a subset of historical iteration indexes containing $k$. The model is adopted in the cutting-plane method \cite{kelley1960cutting} and is also typical in the proximal bundle method \cite{cederberg2025an}.
    \item \textbf{Polyak cutting-plane model}: It takes the maximum of the Polyak model and the cutting-plane model:
        \begin{equation}\label{eq:pcpm}
            \hat{f}^k (x) = \max\{\ell_f, \max_{t\in S^k}~f(y^t)+\langle \nabla f(y^t),x-y^t\rangle\}
        \end{equation}
        where all the parameters are introduced below \eqref{eq:pol_model} and \eqref{eq:cpm}.
    \item \textbf{Two-cut model}: This model is defined in an iterative way. It sets $\hat{f}^1(x) = f(y^1)+\langle\nabla f(y^1), x-y^1\rangle$ and takes the maximum of the cutting planes of $f$ at $y^k$ and $\hat{f}^{k-1}$ at $x^{k-1}$ for $k\geq 2$:
        \begin{equation}\label{eq:two-cut}
        \begin{split}
            \hat{f}^k(x) = &\max\{\hat{f}^{k-1}(x^{k-1})+\langle \hat{g}^{k-1}, x-x^{k-1}\rangle, \\&f(y^k)+\langle \nabla f(y^k),x-y^k\rangle\}
        \end{split}
        \end{equation}
        where $\hat{g}^{k-1} = L(y^{k-1}-x^{k-1})\in \partial \hat{f}^{k-1}(x^{k-1})$.
    \end{itemize}
    The models \eqref{eq:pol_model}--\eqref{eq:two-cut} incorporate historical information or lower bounds to approximate $f$. Compared to the first-order Taylor expansion, they can yield a higher approximation accuracy (see Fig. \ref{fig:surrogate functions}).
    
    The following lemma shows that the models \eqref{eq:pol_model}--\eqref{eq:two-cut} satisfy Assumption \ref{asm:pri_model}.

    \begin{lemma}\label{lemma:mod_sat_asm}
        Suppose that $f$ is convex and differentiable. Then, the four models \eqref{eq:pol_model}--\eqref{eq:two-cut} satisfy Assumption \ref{asm:pri_model}.
    \end{lemma}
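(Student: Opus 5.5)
The plan is to check the three parts of Assumption \ref{asm:pri_model} for each model in turn. Two facts do most of the work. First, a pointwise maximum of affine functions (and constants) is convex. Second, since $f$ is convex and differentiable, every cutting plane $f(z)+\langle\nabla f(z),x-z\rangle$ lies below $f$ everywhere. The constant $\ell_f$, being a lower bound of $f$, lies below $f$ as well.

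For the Polyak model \eqref{eq:pol_model}, the cutting-plane model \eqref{eq:cpm} and the Polyak cutting-plane model \eqref{eq:pcpm} the check is immediate. Each is a finite maximum of affine functions and possibly the constant $\ell_f$, so it is convex, which gives part \ref{primal model assumption a}. Each one contains the cut at $y^k$ as one of its terms; for \eqref{eq:cpm} and \eqref{eq:pcpm} this uses $k\in S^k$. So each is at least that cut, which gives part \ref{primal model assumption b}. Every term is at most $f(x)$, hence so is the maximum, which gives part \ref{primal model assumption c}.

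The two-cut model \eqref{eq:two-cut} is defined recursively, so I will use induction on $k$. The base case $\hat f^1$ is a single cut at $y^1$ and satisfies all three properties. For $k\ge2$, $\hat f^k$ is a maximum of two affine functions, so it is convex. It contains the cut at $y^k$, so part \ref{primal model assumption b} holds. For part \ref{primal model assumption c} I must show the first affine piece lies below $f$. It suffices that $\hat g^{k-1}\in\partial\hat f^{k-1}(x^{k-1})$, because then
\[
\hat f^{k-1}(x^{k-1})+\langle\hat g^{k-1},x-x^{k-1}\rangle\le \hat f^{k-1}(x)\le f(x),
\]
where the second inequality is the induction hypothesis. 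The subgradient claim follows from the optimality condition of \eqref{eq:APBM-x} at iteration $k-1$. Because $\hat f^{k-1}$ is convex (induction hypothesis) and finite, $0\in\partial\hat f^{k-1}(x^{k-1})+L(x^{k-1}-y^{k-1})$, which is exactly $L(y^{k-1}-x^{k-1})\in\partial\hat f^{k-1}(x^{k-1})$.

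The only step needing care is this subgradient claim. It relies on the sum rule for subdifferentials, which applies here because both terms are finite convex functions on $\R^n$. It also relies on the subproblem having a minimizer, which holds because the proximal objective is strongly convex. Everything else is routine.
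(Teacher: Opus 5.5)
Your proof is correct and follows the paper's argument: maxima of affine minorants (plus $\ell_f$) handle the first three models, and induction with the subgradient inequality $\hat f^{k-1}(x)\ge \hat f^{k-1}(x^{k-1})+\langle \hat g^{k-1},x-x^{k-1}\rangle$ handles the two-cut model. The one difference is that you verify $L(y^{k-1}-x^{k-1})\in\partial\hat f^{k-1}(x^{k-1})$ from the optimality condition of \eqref{eq:APBM-x}, whereas the paper only asserts this inclusion in the definition \eqref{eq:two-cut}, so your version is slightly more complete.
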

    \begin{proof}
        See Appendix \ref{appen:mod_sat_asm}.
    \end{proof}

\subsection{Solving the subproblem \eqref{eq:APBM-x}}
The efficiency of the algorithm heavily relies on that of
solving the subproblem \eqref{eq:APBM-x}. In this subsection, we show that for piece-wise linear $\hat{f}^k$, such as the four options in Section \ref{ssec:candidates}, the subproblem \eqref{eq:APBM-x} can be solved quickly.

For simplicity, we rewrite the update \eqref{eq:APBM-x} with a piece-wise linear $\hat{f}^k$ as
\begin{equation}\label{eq:subproblem_primal}
    \underset{x}{\operatorname{minimize}} ~\max_{i\in\{1,\dots,M\}} \{a_i^Tx+b_i\} + \frac{L}{2}\|x-y^k\|^2,
\end{equation}
where $M$ is the number of affine functions in $\hat{f}^k$ and $a_i^T + b_i$ is the $i$-th affine function. Taking the cutting-plane model \eqref{eq:cpm} as an example,
\begin{align*}
     a_i = \nabla f(y^{t_i}),~~~~~ b_i = f(y^{t_i}) -\langle \nabla f(y^{t_i}),y^{t_i} \rangle,
\end{align*}
where $t_i$ is the $i$th element in $S^k$. By letting
\begin{equation}\label{eq:tilde_A}
    A=(a_1, \ldots, a_M)^T\in\R^{M\times n},\quad  b = (b_1;\ldots;b_M)\in\R^M,
\end{equation}
problem \eqref{eq:subproblem_primal} can be equivalently transformed into
\begin{equation}\label{eq:QP}
    \begin{split}
        \underset{x,\theta}{\operatorname{minimize}}~~&~\theta+\frac{L}{2}\|x-y^k\|^2\\
        \operatorname{subject~to}~ &~Ax+b \le \theta\mathbf{1}.
    \end{split}
\end{equation}
If the dimension $n$ of $x$ is small, then problem \eqref{eq:QP} can be easily solved by primal methods such as interior-point methods \cite{goulart2024clarabel}. 

In case the dimension $n$ of $x$ is large, dual methods become more preferable for solving problem \eqref{eq:QP}. By \cite[Lemma 2.4.1]{lemarechal1993convex}, the dual problem of \eqref{eq:QP} is 
\begin{equation}\label{eq:subproblem_dual}
    \begin{split}
        &\underset{\lambda\in\mathbb{R}^M}{\operatorname{maximize}} ~~~~~~ q(\lambda) \\
        &\operatorname{subject ~to}   ~~~~\lambda  \geq 0, ~\mathbf{1}^T\lambda = 1,
    \end{split}
\end{equation}
where $q(\lambda) = -\frac{1}{2L}\|A^T\lambda\|^2+\langle \lambda,Ay^k+b\rangle$. Problem \eqref{eq:subproblem_dual} is a QP with dimension $M$, which is typically small (e.g., $5$ or $10$) in practical implementations. Therefore, compared to directly solving the primal problem \eqref{eq:subproblem_primal}, solving the dual problem \eqref{eq:subproblem_dual} can admit a much low computational cost especially when $M\ll n$. Moreover, problem \eqref{eq:subproblem_dual} can be solved by gradient-based approaches, such as projected gradient descent \cite{boyd2004convex}, because both the gradient computation and the projection operation is simple. To see this, note that by letting $x_\lambda = y^k-\frac{1}{L}A^T\lambda$, we have
\begin{equation*}
 \nabla q(\lambda) = Ax_\lambda+b.
\end{equation*}
Moreover, the projection onto the simplex constraint set can be executed efficiently ($O(M)$) \cite{condat2016fast}.  Once an optimum $\lambda^{\operatorname{opt}}$ of \eqref{eq:subproblem_dual} is solved, the optimum of \eqref{eq:QP} can be recovered by
\begin{equation*}
    x^{\operatorname{opt}} =  y^k-\frac{1}{L}A^T\lambda^{\operatorname{opt}}.
\end{equation*}
When $n=10,000$ and $M=10$, applying projected gradient descent to solve \eqref{eq:subproblem_dual} only takes  $\approx 0.08$ second to reach the $x^{\operatorname{opt}}$ of \eqref{eq:QP} with $10^{-10}$ accuracy on a PC with the AMD Ryzen 7 CPU.

\section{Convergence Analysis}\label{sec:convergence}
In this section, we analyse the convergence of APBM. To this end, we impose an assumption on problem \eqref{problem}.

\begin{assumption}\label{asm:prob}
The following holds:
     \begin{enumerate}[label=(\alph*)]
     \item The function $f: \mathbb{R}^n \rightarrow \mathbb{R}$ is convex and $L$-smooth for some $L>0$.
     \item Problem \eqref{problem} has at least one optimal solution $x^\star$.
    \end{enumerate}
\end{assumption}

Assumption \ref{asm:prob} is standard in the analysis of gradient-based methods \cite{boyd2004convex,Beck09FISTA,liao2025accelerated}. Under this problem setting, GD and PBM attain an $O(1/k)$ convergence rate \cite{boyd2004convex,Nesterov2021memory,diaz2023optimal}, while AGD could achieve the $O(1/k^2)$ convergence rate.

The following theorem demonstrates that the function value error $f(x^k)-f(x^\star)$ can be upper bounded by $t_k$ that generated by \eqref{eq:AGD-t}. 

\begin{theorem}\label{theo:sublinear}
     Suppose that Assumptions \ref{asm:pri_model} -- \ref{asm:prob}  hold. Let $\{x^k\}$ be generated by APBM (Algorithm \ref{algo:APBM}). Then, for any $k\geq 1$,
     \begin{equation}\label{eq:result-tk}
         f(x^{k}) -f(x^\star) \leq \frac{L\|x^0-x^\star\|^2}{2t_{k}^2}.
     \end{equation}
\end{theorem}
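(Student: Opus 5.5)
The plan is to replicate the FISTA analysis of \cite{Beck09FISTA}. The proximal-gradient step is replaced by the bundle step \eqref{eq:APBM-x}. The only place the model enters is a one-step ``key inequality'', which we derive from Assumption \ref{asm:pri_model} together with $L$-smoothness.

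\textbf{Step 1 (key inequality).} We show that for every $x\in\R^n$,
\begin{equation*}
f(x)-f(x^k)\ \ge\ \frac{L}{2}\|x^k-y^k\|^2+L\langle y^k-x,\ x^k-y^k\rangle .
\end{equation*}
The function $\hat f^k+\frac{L}{2}\|\cdot-y^k\|^2$ is $L$-strongly convex by Assumption \ref{asm:pri_model}\ref{primal model assumption a}, and $x^k$ is its minimizer. Hence
\begin{equation*}
\hat f^k(x)+\tfrac{L}{2}\|x-y^k\|^2 \ge \hat f^k(x^k)+\tfrac{L}{2}\|x^k-y^k\|^2+\tfrac{L}{2}\|x-x^k\|^2 .
\end{equation*}
We bound the left side using $\hat f^k(x)\le f(x)$, which is part \ref{primal model assumption c}. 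On the right side we use part \ref{primal model assumption b} together with the descent lemma:
\begin{equation*}
\hat f^k(x^k)+\tfrac L2\|x^k-y^k\|^2\ \ge\ f(y^k)+\langle\nabla f(y^k),x^k-y^k\rangle+\tfrac L2\|x^k-y^k\|^2\ \ge\ f(x^k).
\end{equation*}
This gives $f(x)+\frac L2\|x-y^k\|^2\ge f(x^k)+\frac L2\|x-x^k\|^2$. Expanding $\|x-y^k\|^2-\|x-x^k\|^2$ yields exactly the FISTA inequality \cite[Lemma 2.3]{Beck09FISTA} with $g\equiv 0$. This is the step that needs care, because $f(x^k)$ must be upper bounded by the model value plus the proximal term. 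Part (b) is precisely what makes this work, even though $\hat f^k$ is not the linearization.

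\textbf{Step 2 (Lyapunov recursion).} Set $v_k=f(x^k)-f(x^\star)$ and $u^k=t_kx^k-(t_k-1)x^{k-1}-x^\star$. We apply the key inequality at iteration $k+1$ twice: once with $x=x^k$ and once with $x=x^\star$. We multiply the first by $t_{k+1}-1$, add the second, and then multiply the result by $t_{k+1}$. Using $t_{k+1}^2-t_{k+1}=t_k^2$ from \eqref{eq:AGD-t} and the extrapolation \eqref{eq:AGD-y}, the standard completing-the-square manipulation gives
\begin{equation*}
\tfrac{2}{L}t_k^2v_k-\tfrac{2}{L}t_{k+1}^2v_{k+1}\ \ge\ \|u^{k+1}\|^2-\|u^k\|^2 .
\end{equation*}
In that manipulation we use $t_{k+1}y^{k+1}=t_{k+1}x^k+(t_k-1)(x^k-x^{k-1})$, which is identical to FISTA.

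\textbf{Step 3 (telescoping and base case).} Summing the recursion gives $\frac2L t_k^2v_k\le \frac2L t_1^2v_1+\|u^1\|^2=\frac2L v_1+\|x^1-x^\star\|^2$. For the base case we apply the key inequality at $k=1$ with $x=x^\star$ and $y^1=x^0$. Since $\frac L2\|x^1-y^1\|^2+L\langle y^1-x^\star,x^1-y^1\rangle=\frac L2(\|x^1-x^\star\|^2-\|x^0-x^\star\|^2)$, it gives $v_1\le\frac L2(\|x^0-x^\star\|^2-\|x^1-x^\star\|^2)$. Substituting, we get $\frac2Lt_k^2v_k\le\|x^0-x^\star\|^2$, which is \eqref{eq:result-tk}.

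Since Step 2 is purely algebraic once Step 1 holds, the main obstacle is Step 1. Specifically, one must check that the combination of Assumption \ref{asm:pri_model}(b)--(c) and the $L$-strong convexity of the subproblem reproduces the proximal-gradient inequality with constant $L$.
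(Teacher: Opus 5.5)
Your proposal is correct and follows essentially the same route as the paper. The paper also derives the one-step inequality $f(x^{k+1})-f(x)\le \frac{L}{2}\|x-y^{k+1}\|^2-\frac{L}{2}\|x-x^{k+1}\|^2$ from the same three ingredients: the $L$-strong convexity of the subproblem, Assumption~\ref{asm:pri_model}(b) combined with the descent lemma, and Assumption~\ref{asm:pri_model}(c). It then weights the instances $x=x^k$ and $x=x^\star$ by $t_{k+1}-1$ and $1$, completes the square via the extrapolation identity, and telescopes with the identical base-case bound at $k=1$ (your $u^k$ is just the negative of the paper's $x^\star+(t_k-1)x^{k-1}-t_kx^k$).
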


\begin{proof}
    See Appendix \ref{appe:theo1}.
\end{proof}

To establish the relationship between the function value error and the number of iterations, the following technical lemma provides a crucial lower bound of $t_k$, which implies that $t_k$ grows linearly with the iteration number. 
\begin{lemma}{\cite[Lemma 4.3]{Beck09FISTA}}\label{lem:tk}
    Let $\{t_k\}$ be generated by \eqref{eq:AGD-t} with $t_1 =1$. Then, for any $ k\geq1$, 
    \begin{equation}\label{eq:tk_geq_k+1}
        t_k \geq \frac{k+1}{2}.
    \end{equation}
\end{lemma}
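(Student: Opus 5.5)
The statement is Lemma \ref{lem:tk}, a fact about the scalar recursion \eqref{eq:AGD-t} alone; nothing about $f$ or the model $\hat{f}^k$ enters. I will prove it by induction on $k$.

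\emph{Base case.} For $k=1$ we have $t_1 = 1 = \frac{1+1}{2}$, so \eqref{eq:tk_geq_k+1} holds with equality.

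\emph{Inductive step.} Assume $t_k \ge \frac{k+1}{2}$ for some $k\ge 1$. From \eqref{eq:AGD-t}, and since $\sqrt{1+4t_k^2} \ge \sqrt{4t_k^2} = 2t_k$ (using $t_k>0$), we get
\begin{equation*}
    t_{k+1} = \frac{1+\sqrt{1+4t_k^2}}{2} \ge \frac{1+2t_k}{2} = t_k + \frac12 .
\end{equation*}
Combining this with the inductive hypothesis gives
\begin{equation*}
    t_{k+1} \ge \frac{k+1}{2} + \frac12 = \frac{k+2}{2},
\end{equation*}
which is \eqref{eq:tk_geq_k+1} at index $k+1$.

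\emph{Positivity.} The square-root bound needs $t_k>0$ for all $k$. This follows from a trivial induction: $t_1=1>0$, and $t_{k+1}\ge \frac{1+1}{2}=1$ always, since $\sqrt{1+4t_k^2}\ge 1$. In fact the chain of bounds shows the sequence is increasing.

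There is no real obstacle here. The only point needing care is to drop the $1$ under the square root in the correct direction, so that the inequality gives a lower bound on $t_{k+1}$. The result then combines with Theorem \ref{theo:sublinear} to yield $f(x^k)-f(x^\star)\le \frac{2L\|x^0-x^\star\|^2}{(k+1)^2}$.
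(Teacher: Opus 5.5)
Your induction is correct, and it is essentially the standard argument. The paper gives no proof of its own and simply cites Lemma 4.3 of Beck and Teboulle's FISTA paper, whose proof is exactly the bound $t_{k+1} \ge \frac{1+2t_k}{2} = t_k + \frac12$ combined with the inductive hypothesis, as you do.
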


Based on the Theorem \ref{theo:sublinear} and Lemma \ref{lem:tk}, below we provide the final result.

\begin{cor}\label{cor:APBM}
    Suppose all the conditions in Theorem \ref{theo:sublinear} hold. Then, for any $k \geq 0$,
    \begin{equation}\label{eq:result}
         f(x^{k}) -f(x^\star) \leq \frac{2L\|x^0-x^\star\|^2}{(k+1)^2}.
     \end{equation}
\end{cor}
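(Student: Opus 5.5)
The corollary follows by combining Theorem \ref{theo:sublinear} with Lemma \ref{lem:tk}; the only subtlety is the indexing at $k=0$.

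For $k\ge 1$, Theorem \ref{theo:sublinear} gives
\[
f(x^k)-f(x^\star)\le \frac{L\|x^0-x^\star\|^2}{2t_k^2}.
\]
Lemma \ref{lem:tk} gives $t_k\ge (k+1)/2>0$, so $t_k^2\ge (k+1)^2/4$ and $1/(2t_k^2)\le 2/(k+1)^2$. Substituting this into the bound above yields \eqref{eq:result} for every $k\ge1$. This step only uses that the map $s\mapsto 1/s^2$ is decreasing on $(0,\infty)$.

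The case $k=0$ is not covered by the theorem, because $x^0$ is the initialization rather than an output of the proximal bundle step. Here \eqref{eq:result} reads $f(x^0)-f(x^\star)\le 2L\|x^0-x^\star\|^2$. Under Assumption \ref{asm:prob}, $L$-smoothness and $\nabla f(x^\star)=0$ give the descent-lemma bound
\[
f(x^0)-f(x^\star)\le \langle \nabla f(x^\star),x^0-x^\star\rangle+\tfrac{L}{2}\|x^0-x^\star\|^2=\tfrac{L}{2}\|x^0-x^\star\|^2,
\]
which is smaller than the required $2L\|x^0-x^\star\|^2$. So the $k=0$ case also holds.

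There is no real obstacle. The only care needed is to treat the $k=0$ case separately, as above, instead of appealing to Theorem \ref{theo:sublinear}, which is stated only for $k\ge1$.
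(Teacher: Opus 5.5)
Your proposal is correct and matches the paper's proof: substitute $t_k\ge (k+1)/2$ from Lemma~\ref{lem:tk} into Theorem~\ref{theo:sublinear} for $k\ge 1$, and handle $k=0$ separately via $L$-smoothness, $f(x^0)-f(x^\star)\le \frac{L}{2}\|x^0-x^\star\|^2\le 2L\|x^0-x^\star\|^2$. You also state explicitly that $\nabla f(x^\star)=0$ is used in the descent-lemma step, which the paper leaves implicit.
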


\begin{proof}
    See Appendix \ref{appen:cor}.
\end{proof}

APBM achieves the optimal $O(1/k^2)$ convergence rate for convex and smooth optimization \cite[Section 2.1.2]{nesterov2018lectures}. APBM generalizes AGD and the convergence result is the same order as AGD \cite{nesterov1983method,Beck09FISTA}. However, due to the use of more accurate models, APBM can yield much faster convergence and stronger robustness, which will be shown numerically in Section \ref{sec:numrical}.

\begin{figure*}[ht]
    \centering
    \begin{subfigure}{0.45\linewidth}
    \includegraphics[width=\linewidth]{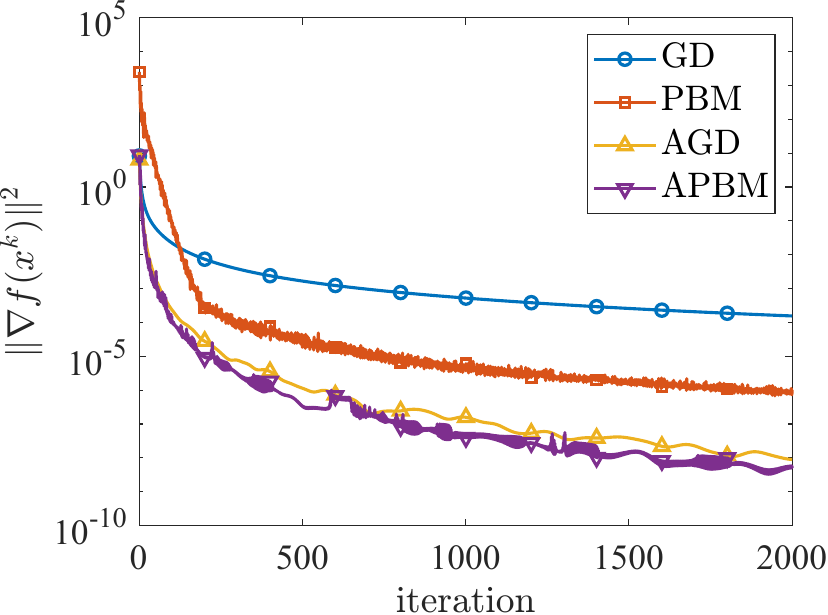}
    \caption*{(a) Convergence of GD, PBM ($m=15$), AGD, and APBM ($m=15$)}
    \end{subfigure}
    \hfill
   \begin{subfigure}{0.43\linewidth}
          \includegraphics[width=\linewidth]{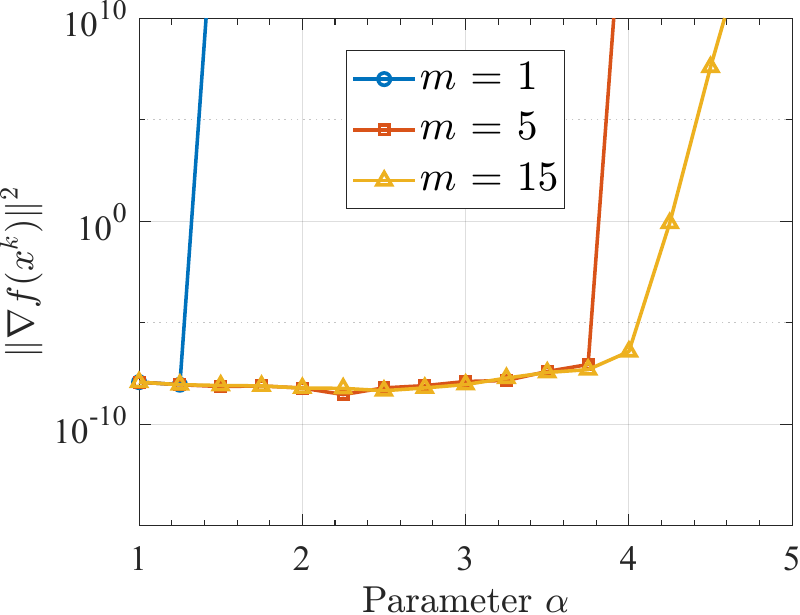}
    \caption*{(b) Final optimality residual of APBM after 2000 iterations with the step-size $\gamma=\alpha/L$ where $L=\|E\|^2/N$}
   \end{subfigure}
    \caption{Convergence performance in solving the least squares problem \eqref{problem:LS}}
    \label{fig:LS}
\end{figure*}

\begin{figure*}[ht]
    \centering
    \begin{subfigure}{0.45\linewidth}
    \includegraphics[width=\linewidth]{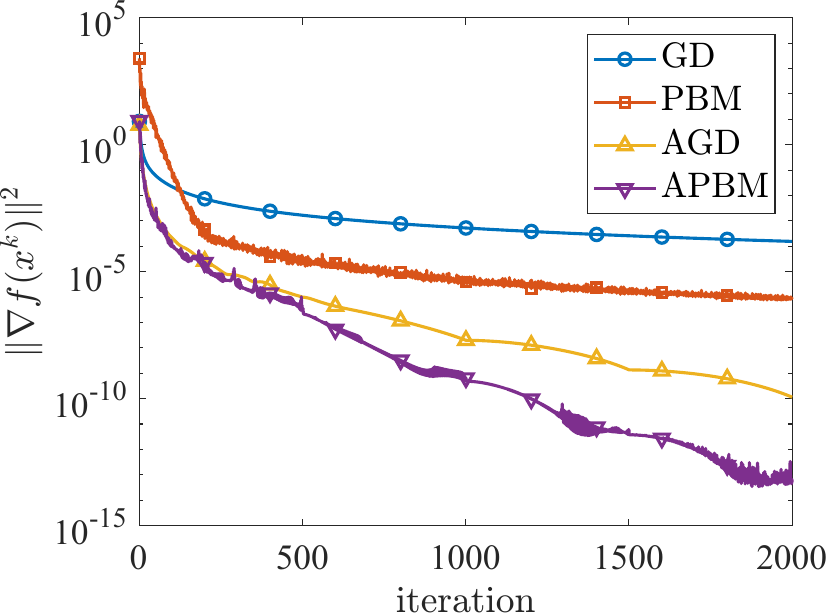}
    \caption*{(a) Convergence of GD, PBM ($m=15$), AGD, and APBM ($m=15$)}
    \end{subfigure}
    \hfill
   \begin{subfigure}{0.43\linewidth}
          \includegraphics[width=\linewidth]{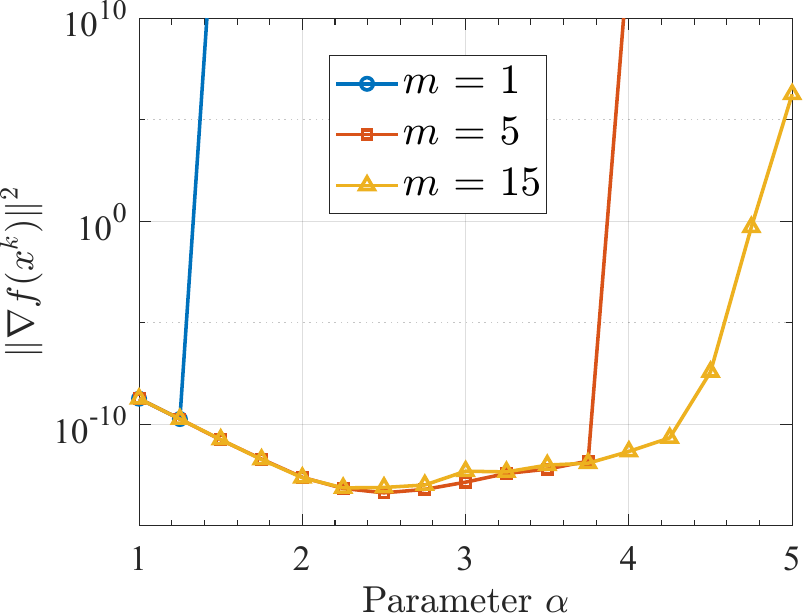}
    \caption*{(b) Final optimality residual of APBM after 2000 iterations with the step-size $\gamma=\alpha/L$, where $L=\|E\|^2/N$}
   \end{subfigure}
    \caption{Convergence performance (with restart scheme) in solving the least squares problem \eqref{problem:LS}}
    \label{fig:LS-restart}
\end{figure*}

\section{Numerical Example}\label{sec:numrical}
We demonstrate the performance of our method in solving the following least squares problem:
\begin{equation}\label{problem:LS}
    \underset{x\in\mathbb{R}^n}{\operatorname{minimize}}~~\frac{1}{2N}\|Ex-w\|^2,
\end{equation}
where $N=800$, $n=800$, and $E\in\mathbb{R}^{N\times n}$, $w\in \mathbb{R}^{N}$ are randomly generated.

We compare GD, PBM, AGD, and APBM in solving \eqref{problem:LS}. The experiment settings are as follows: Both PBM and APBM adopt the cutting-plane model \eqref{eq:cpm} with $S_k=[k-m+1,k]$. For the updates \eqref{eq:AGD-x} and \eqref{eq:APBM-x} in AGD and APBM, we rewrite them as
\begin{align*}
    x^{k+1} &= \operatorname{\arg\;\min}_x f(y^k)\!+\!\langle\nabla f(y^k),x\!-\!y^k\rangle \!+\!\frac{1}{2\gamma}\|x\!-\!y^k\|^2,\\
    x^{k+1} &= \operatorname{\arg\;\min}_x \hat{f}^k(x)+\frac{1}{2\gamma}\|x-y^k\|^2,
\end{align*}
respectively, where $\gamma>0$ is referred to as \emph{step-size} and the above updates reduce to \eqref{eq:AGD-x} and \eqref{eq:APBM-x} when $\gamma=1/L$.

We first compare the convergence speed where the step-size in all methods are fine-tuned for better performance and then test the robustness of the algorithms with respect to the step-size $\gamma$. The experimental results are plotted in Fig \ref{fig:LS}, which demonstrate that APBM takes the advantages of AGD and PBM:
\begin{enumerate}[label=\arabic*)]
    \item By utilizing the Nesterov's momentum scheme, APBM converges faster than PBM (at least 3x faster);
    \item Benefiting from the more accurate proximal bundle model, APBM has stronger robustness than AGD, which eases parameter selection. In particular, AGD is equivalent to APBM with $m=1$ and diverges when step-size $\geq 1.4/L$, whereas APBM with $m = 10$ allows for $4/L$.
\end{enumerate}

To further improve the performance, we incorporate a fixed-restart scheme \cite{o2015adaptive} into both AGD and APBM, where $t_k$ is set to $1$ every $500$ iterations. The results are displayed in Fig \ref{fig:LS-restart}. Compared with Fig.~\ref{fig:LS}, the restart scheme enhances the performance of both methods; however, the improvement is more substantial for APBM. This suggests that APBM benefits more from the restart scheme, leading to faster convergence than AGD. Moreover, the restart scheme preserves the strong robustness of APBM with respect to the step-size.

\section{Conclusion}\label{sec:conclusion}
We proposed an APBM that integrates Nesterov's momentum scheme into the classical PBM. The proposed algorithm achieves the optimal $O(1/k^2)$ convergence rate for convex and smooth problems while preserving the robustness and fast convergence properties of PBM. We provided the theoretical convergence guarantee under standard assumptions and demonstrated the fast convergence and robustness through numerical experiments. We consider further accelerating APBM by introducing additional mechanisms such as restart schemes and establishing their theoretical guarantees, as our future work.

\begin{appendix}

\subsection{Proof of Lemma \ref{lemma:mod_sat_asm}}\label{appen:mod_sat_asm}
Since $\hat{f}^k$ in \eqref{eq:pol_model}--\eqref{eq:two-cut} are the maximum of affine functions, they are convex and satisfy Assumption \ref{asm:pri_model} (a). Assumption \ref{asm:pri_model} (b) is straightforward to see from the forms of $\hat{f}^k$ in \eqref{eq:pol_model}--\eqref{eq:two-cut}.  
    
    To show Assumption \ref{asm:pri_model} (c), note that since $f$ is convex,
    \[ f(x) \geq  f(y^t) + \langle \nabla f(y^t),x-y^t\rangle, \quad\forall  t \in S^k,\]
    i.e., $f(y^t) + \langle \nabla f(y^t),x-y^t\rangle, t \in S^k$ are minorants of $f$. Moreover, $\ell_f\le \min_x f(x)$. Therefore, the models \eqref{eq:pol_model}--\eqref{eq:pcpm} take the maximum of minorants of $f$, which yields $\hat{f}^k(x) \le f(x)$. For $\hat{f}^k$ in \eqref{eq:two-cut}, we show $\hat{f}^k \leq f$ by induction. By the convexity of $f$ and $y^1 =x^0$, the initial model \[\hat{f}^1(x) = f(x^0) + \langle \nabla f(x^0),x-x^0\rangle \leq f(x).\] Assume that for some $k \geq 1$, we have $\hat{f}^k(x) \leq f(x)$. By $f(x)\ge \hat{f}^k(x)$ and the convexity of $\hat{f}^k$, we have that for any $\hat{g}^{k}\in\partial\hat{f}^k(x^{k})$,
    \[f(x)\ge \hat{f}^k(x) \geq \hat{f}^k(x^{k}) + \langle \hat{g}^{k},x-x^{k}\rangle,\]
    which, together with the convexity of $f$, yields
    \begin{equation*}
    \begin{split}
        \hat{f}^{k+1}(x)=&\max\{\hat{f}^k(x^{k})+\langle \hat{g}^{k},x-x^k\rangle,\\ &f(y^k)+\langle \nabla f(y^k),x-y^k\rangle\} \\ \leq& 
        f(x).
    \end{split}
    \end{equation*}
    Concluding all the above, Assumption \ref{asm:pri_model} (c) holds for all $k\geq 1$.


\subsection{Proof of Theorem \ref{theo:sublinear}}\label{appe:theo1}
For any $k\ge 0$, define \[\phi^{k+1}(x) = \hat{f}^{k+1}(x) + \frac{L}{2}\|x-y^{k+1}\|^2.\] By Assumption \ref{asm:pri_model} (a) and the $L$-strong convexity of $\frac{L}{2}\|x-y^{k+1}\|^2$, $\phi^{k+1}(x)$ is $L$-strongly convex. This together with $x^{k+1} = \arg\min_x \phi^{k+1}(x)$ yield
\begin{equation}\label{eq:Lk}
    \phi^{k+1}(x^{k+1}) - \phi^{k+1}(x) \leq -\frac{L}{2}\|x^{k+1}-x\|^2, ~~\forall x \in \mathbb{R}^n.
\end{equation}
By Assumption \ref{asm:pri_model} (b) and the smoothness of $f$, we have
\begin{equation}\label{eq:Lk-1}
    \begin{split}
        &\phi^{k+1}(x^{k+1})\\ =& \hat{f}^{k+1}(x^{k+1}) + \frac{L}{2}\|x^{k+1}-y^{k+1}\|^2 \\
        \geq& f(y^{k+1})+\langle \nabla f(y^{k+1}),x^{k+1}-y^{k+1}\rangle \\ &+ \frac{L}{2}\|x^{k+1}-y^{k+1}\|^2 \\
        \geq& f(x^{k+1}).
    \end{split}
\end{equation}
By Assumption \ref{asm:pri_model} (c), 
\begin{equation}\label{eq:Lk-2}
    \begin{split}
        \phi^{k+1}(x) &= \hat{f}^{k+1}(x) + \frac{L}{2}\|x-y^{k+1}\|^2 \\
    &\leq f(x) + \frac{L}{2}\|x-y^{k+1}\|^2.
    \end{split}
\end{equation}
Substituting \eqref{eq:Lk-1} and \eqref{eq:Lk-2} into \eqref{eq:Lk}, we have
\begin{equation}\label{eq:A}
    f(x^{k+1})-f(x) \leq \frac{L}{2}\|x-y^{k+1}\|^2 - \frac{L}{2}\|x-x^{k+1}\|^2.
\end{equation}
Moreover,
\begin{equation*}
\begin{split}
    & \frac{L}{2}\|x-y^{k+1}\|^2 - \frac{L}{2}\|x-x^{k+1}\|^2\\
    =&\frac{L}{2}\|x^{k+1}-y^{k+1}\|^2+L\langle x-x^{k+1},x^{k+1}-y^{k+1} \rangle.
\end{split}
\end{equation*}
Therefore,
\begin{equation*}
\begin{split}
      &f(x^{k+1})-f(x) \\\leq& \frac{L}{2}\|x^{k+1}-y^{k+1}\|^2 + L\langle x-x^{k+1},x^{k+1}-y^{k+1} \rangle.
\end{split}  
\end{equation*}
Letting $x = x^k$ and $x = x^\star$ respectively, we obtain
\begin{equation}\label{eq:111}
    \begin{split}
        &f(x^{k+1})-f(x^k)  \\ \leq& \frac{L}{2}\|x^{k+1}-y^{k+1}\|^2 + L\langle x^k-x^{k+1},x^{k+1}-y^{k+1} \rangle  ,
    \end{split}
\end{equation}
\begin{equation}\label{eq:222}
\begin{split}
    &f(x^{k+1})-f(x^\star)  \\ \leq& \frac{L}{2}\|x^{k+1}-y^{k+1}\|^2 + L \langle x^\star-x^{k+1},x^{k+1}-y^{k+1}\rangle .
\end{split}
\end{equation}
For conciseness, let $v^k = f(x^k)-f(x^\star)$. To get a relationship between $v^{k+1}$ and $v^k$, we multiply \eqref{eq:111} by $(t_{k+1}-1)$ and add the resulting equation to \eqref{eq:222}, which yields
\begin{equation*}
    \begin{split}
        &t_{k+1}v^{k+1} - (t_{k+1}-1)v^k  \\ \leq&   L \langle (t_{k+1}-1)(x^k-x^{k+1})+x^\star-x^{k+1},x^{k+1}-y^{k+1}\rangle \\
        &+ t_{k+1}\frac{L}{2}\|x^{k+1}-y^{k+1}\|^2.
    \end{split}
\end{equation*}
Multiplying both sides of the above inequality by $t_{k+1}$ and using $t_{k}^2 = t_{k+1}(t_{k+1}-1)$, we obtain
\begin{equation*}
    \begin{split}
        &t_{k+1}^2v^{k+1} - t_k^2v^k  \leq \frac{L}{2}\|\underbrace{t_{k+1}(x^{k+1}-y^{k+1})}_{\mathbf{b}}\|^2 \\+& L\langle \underbrace{(t_{k+1}-1)x^k-t_{k+1}x^{k+1}+x^\star}_{\mathbf{a}},\underbrace{t_{k+1}(x^{k+1}-y^{k+1}}_{\mathbf{b}})
\rangle.
    \end{split}
\end{equation*}
By $\|\mathbf{a}+\mathbf{b}\|^2 - \|\mathbf{a}\|^2 = 2\langle \mathbf{a},\mathbf{b}\rangle+\|\mathbf{b}\|^2$, it follows that
\begin{equation}\label{eq:NNN}
    \begin{split}
        &t_{k+1}^2v^{k+1} - t_k^2v^k \leq \frac{L}{2}\|x^\star+(t_{k+1}-1)x^k-t_{k+1}y^{k+1}\|^2 \\ &- \frac{L}{2}\|x^\star+(t_{k+1}-1)x^k-t_{k+1}x^{k+1}\|^2.
    \end{split}
\end{equation}
By \eqref{eq:AGD-y}, we have $t_{k+1}y^{k+1} = t_{k+1}x^k + (t_k-1)(x^k-x^{k-1})$, substituting which into \eqref{eq:NNN} gives
\begin{equation*}
    \begin{split}
        &t_{k+1}^2v^{k+1} - t_k^2v^k \leq \frac{L}{2}\|x^\star+(t_{k}-1)x^{k-1}-t_{k}x^{k}\|^2  \\ &- \frac{L}{2}\|x^\star+(t_{k+1}-1)x^k-t_{k+1}x^{k+1}\|^2.
    \end{split}
\end{equation*}
Using $t_1 =1$ and applying telescoping cancellation on the above equation yields that for all $k \geq 1$,
\begin{align*}
    t_{k}^2v^{k}-v^1 &\leq \frac{L}{2}\|x^\star+(t_{1}-1)x^{0}-t_{1}x^{1}\|^2 \\
    & = \frac{L}{2}\|x^\star-x^1\|^2.
\end{align*}
By $v^k = f(x^k)-f(x^\star)$, \eqref{eq:A}, and $y^1=x^0$, we have that for all $k \geq 1$
\begin{equation*}
    \begin{split}
    &t_{k}^2(f(x^{k})-f(x^\star)) \\ \leq& f(x^1) -f(x^\star) + \frac{L}{2}\|x^\star-x^1\|^2 \\
    \leq& \frac{L}{2}\|x^\star-y^1\|^2 - \frac{L}{2}\|x^\star-x^1\|^2 + \frac{L}{2}\|x^\star-x^1\|^2\\
    =& \frac{L}{2}\|x^\star-x^0\|^2,
    \end{split}
\end{equation*}
which implies \eqref{eq:result-tk}.

\subsection{Proof for Corollary \ref{cor:APBM}}\label{appen:cor}
Substituting \eqref{eq:tk_geq_k+1} into \eqref{eq:result-tk} yields that for all $k \geq 1$
\begin{equation}\label{eq:result-1}
    f(x^{k}) -f(x^\star) \leq \frac{2L\|x^\star-x^0\|^2}{(k+1)^2}.
\end{equation}
Next, we discuss the case of $f(x^0)-f(x^\star)$. By the smoothness of $f$, we obtain
\begin{equation}\label{eq:result-2}
    f(x^0) - f(x^\star) \leq \frac{L}{2}\|x^0-x^\star\|^2 \leq 2L\|x^0-x^\star\|^2.
\end{equation}
Combining \eqref{eq:result-2} with \eqref{eq:result-1} results in \eqref{eq:result}.

\end{appendix}


\bibliographystyle{ieeetran}
\bibliography{reference}

\end{document}